\newcommand{\RR}{\mathbb{R}} 
\newcommand{\CC}{\mathbb{C}} 
\newcommand{\UB}{\mathbb{B}}
\newcommand{\HP}{\mathbb{H}}
\newcommand{\RE}{\mathrm{Re}\,}
\newcommand{\Aut}{\mathrm{Aut}}
\newcommand{\Isom}{\mathrm{Isom}}
\newcommand{\grad}{\mathrm{grad}}
\newcommand{\Ric}{\mathrm{Ric}}
\newcommand{\cK}{\mathsf{K}}
\newcommand{\dc}{d^c}
\newcommand{\vV}{\mathcal{V}}
\newcommand{\afrac}[2]{\genfrac{}{}{0pt}{}{#1}{#2}}
\newcommand{\paren}[1]{\left(#1\right)}
\newcommand{\abs}[1]{\left\lvert#1\right\rvert}
\newcommand{\norm}[1]{\left\|#1\right\|}
\newcommand{\set}[1]{\left\{#1\right\}}
\newcommand{\ip}[1]{\left\langle#1\right\rangle}
\newcommand{\pd}[2]{\frac{\partial#1}{\partial#2}}
\newcommand{\pdl}[2]{\partial#1/\partial#2}
\newcommand{\ind}[4]{{#1}^{\phantom{#2}#3}_{#2\phantom{#3}#4}}
\newcommand{\im}{\sqrt{-1}}
\newcommand{\KE}{K\"ahler-Einstein }
\newtheorem{theorem}{Theorem}[section]
\newtheorem{lemma}[theorem]{Lemma}
\newtheorem{proposition}[theorem]{Proposition}
\theoremstyle{definition}
\theoremstyle{remark}
\newtheorem{remark}[theorem]{Remark}
\numberwithin{equation}{section}
\title[A characterization of the unit ball]{A characterization of the unit ball by a K\"ahler-Einstein potential}
\author{Young-Jun Choi}
\address{Department of Mathematics, 
Pusan National University,
2, Busandaehak-ro 63beon-gil,
Geumjeong-gu, Busan, 46241, Republic of Korea}
\email{youngjun.choi@pusan.ac.kr}
\author{Kang-Hyurk Lee}
\address{Department of Mathematics and Research Institute of Natural Science, 
Gyeongsang National University, 
Jinju, Gyeongnam, 52828, Republic of Korea}
\email{nyawoo@gnu.ac.kr}
\author{Aeryeong Seo}
\address{Department of Mathematics and RIRCM,
Kyungpook National University,
80, Daehak-ro, Buk-gu, Daegu, 41566, Republic of Korea}
\email{aeryeong.seo@knu.ac.kr}
\subjclass[2010]{32Q20, 32M05, 53C55}
\keywords{the K\"ahler-Einstein metric, complete holomorphic vector fields, the unit ball, automorphism groups}
\thanks{This work was supported by Samsung Science and Technology Foundation under Project Number SSTF-BA2201-01. The first named author was supported by the National Research Foundation of Korea (NRF) grant funded by the Korea government (No. 2018R1C1B3005963, No. 2021R1A4A10324181262182065300102).
The third  named author was supported by the National Research Foundation of Korea (NRF) grant funded by the Korea government (No. NRF-2022R1F1A1063038) }
\dedicatory{Dedicated to Professor Kang-Tae Kim on the occasion of his 65th birthday}
\begin{document}

\begin{abstract}
We will show that a universal covering of a compact K\"ahler manifold with ample canonical bundle is the unit ball if it admits a global potential function of the K\"ahler-Einstein metric whose gradient length is a minimal constant. As an application, we will extend the Wong-Rosay theorem to a complex manifold without boundary.
\end{abstract}


\maketitle

\section{Introduction}

Most, but not all, negatively curved compact K\"ahler manifolds are covered by bounded symmetric domains. Thus it is natural to distinguish bounded symmetric domains from exceptional spaces, such as the universal covering of the Mostow-Siu surface (\cite{Mostow-Siu1980}). From this point of view, bounded symmetric domains have been characterized as a bounded domain with relevant conditions to the Cartan/Harish-Chandra realization of hermitian symmetric space of noncompact type (\cite{Wong1977,Rosay1979,Frankel1989}), and as an irreducible complex manifold with nontrivial, holomorphic transformation group (\cite{Nadel1990,Frankel1995}). An important aspect in these studies is the fact that the Bergman metric of a bounded symmetric domain is the unique, biholomorphically invariant K\"ahler metric and its Ricci curvature is negative; thus a canonical bundle of any compact quotient is positive so ample. For the uniformization of a compact complex manifold with ample canonical bundle, it makes more sense to regard the Bergman metric as a complete \KE metric with negative Ricci curvature. In this paper, we shall characterize the unit ball (the bounded symmetric domain of rank $1$) by an existence of a certain potential function of the \KE metric.

As pointed out by Kai-Ohsawa~\cite{Kai-Ohsawa2007} (see also Theorem~\ref{thm:Kai-Ohsawa}), a bounded homogeneous domain (so symmetric one also) admits a global potential function of its Bergman metric whose gradient length is constant, and a value of constant gradient length is indeed uniquely determined. We shall show that  the unit ball  has the minimal value for constant gradient length among bounded symmetric domains of the same dimension (Lemma~\ref{lem:gradient length of BSDs}). Moreover we can characterize the unit ball by the same minimal value condition among universal coverings of compact K\"ahler manifolds with ample canonical bundle.

\begin{theorem}\label{thm:main1}
Let $X^n$ be a simply connected complex manifold of dimension $n$ which covers a compact complex manifold and admits a complete \KE metric $\omega$ with negative Ricci curvature $-\cK$. Suppose that there is a global potential function $\varphi:X\to\RR$ of $\omega$ satisfying
\begin{equation*}
\norm{d\varphi}_\omega^2\equiv  \frac{2(n+1)}{\cK} \;.
\end{equation*}
Then $X$ is biholomorphic to the unit ball $\UB^n=\set{z\in\CC^n:\norm{z}<1}$.
\end{theorem}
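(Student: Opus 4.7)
The plan is to exhibit a non-trivial one-parameter subgroup of $\Aut(X)$ under which $\varphi$ is translated by a constant, and then to apply a Wong-Rosay-type rigidity argument (of the kind developed later in the paper) to conclude $X\cong\UB^n$. By rescaling $\omega$ we reduce to $\cK=n+1$, so the hypothesis becomes $\|d\varphi\|^2_\omega\equiv 2$. Because $\omega=i\partial\bar\partial\varphi$, the tracing identity $\Delta_\omega\varphi\equiv 2n$ is automatic.

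First I would extract a pointwise Hessian identity from the Bochner formula
\begin{equation*}
\tfrac{1}{2}\Delta_\omega\|d\varphi\|^2_\omega = \|\nabla^2\varphi\|^2 + \ip{d\Delta_\omega\varphi, d\varphi} + \Ric_\omega(\nabla\varphi,\nabla\varphi).
\end{equation*}
Since $\|d\varphi\|^2_\omega$ and $\Delta_\omega\varphi$ are both constant, and $\Ric_\omega=-(n+1)\omega$, this collapses to $\|\nabla^2\varphi\|^2\equiv 2(n+1)$. A type decomposition of the real Hessian on the K\"ahler manifold then reads $\|\nabla^2\varphi\|^2 = 2\|T\|^2 + 2n$, with $T_{ij}:=\nabla_i\nabla_j\varphi$; the $2n$ comes from the fixed $(1,1)$-part $\nabla_i\nabla_{\bar j}\varphi=g_{i\bar j}$. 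Hence $\|T\|^2\equiv 1$ pointwise.

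The next and hardest step is to produce a complete holomorphic vector field from $\varphi$. The $(1,0)$-gradient $V^i=g^{i\bar j}\varphi_{\bar j}$ is not itself holomorphic: a direct computation gives $\partial_{\bar k}V^i = g^{i\bar j}\overline{T_{jk}}$, which does not vanish by the previous step. I would instead seek a correction $Z=V+W$, with $W\in\Gamma(T^{1,0}X)$ built algebraically from $\varphi$, $T$ and the metric, such that $\bar\partial Z=0$. The only available input beyond the pointwise norm is the covariant derivative
\begin{equation*}
\nabla_{\bar k}T_{ij}=\nabla_{\bar k}\nabla_i\nabla_j\varphi=-R^l{}_{j\bar k i}\varphi_l,
\end{equation*}
which, after the K\"ahler-Einstein identity $R_{j\bar k}=-(n+1)g_{j\bar k}$ is fed in, reduces to an algebraic expression in $V$, $T$ and $g$ that I expect to be solvable precisely at the critical value $\|d\varphi\|^2=2(n+1)/\cK$. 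Granting such a $Z$, $\|Z\|^2_\omega$ is a positive constant, so $Z$ is complete on $(X,\omega)$, and its real flow $\{\phi_s\}$ gives a one-parameter subgroup of $\Aut(X)$ with $\phi_s^*\varphi=\varphi+cs$ for some $c>0$.

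Once this flow is in hand, $\Aut(X)$ is non-compact; since $X$ covers the compact manifold $X/\Gamma$, a suitable sequence drawn from $\{\phi_s\}$ yields orbit accumulation in $X$, placing us exactly in the setting of the boundaryless Wong-Rosay theorem that the paper develops as its main application, and identifying $X$ with $\UB^n$. The hard part will be the correction step: the pointwise norm identity $\|T\|^2\equiv 1$ is not in itself enough to guarantee that the obstruction class of $V$ in $H^{0,1}(T^{1,0}X)$ vanishes. The necessary rigidity comes from the interplay of the gradient-length hypothesis with the K\"ahler-Einstein curvature identity, and it is expected to fail for any larger value of $\|d\varphi\|^2$, in agreement with Lemma~\ref{lem:gradient length of BSDs} which picks out the ball as the unique bounded symmetric domain where $2(n+1)/\cK$ is achieved.
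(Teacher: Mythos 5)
Your reduction to $\cK=n+1$ and the Bochner computation giving $\|T\|^2\equiv 1$ for the $(2,0)$-Hessian are consistent with the paper (this is exactly how Proposition~\ref{prop:lower bound of length} is obtained, via $0=\Delta_\omega\norm{\partial\varphi}_\omega^2=\norm{\nabla'^2\varphi}_\omega^2+n-\cK\norm{\partial\varphi}_\omega^2$). But the two essential steps are missing. First, the holomorphic vector field: you look for an \emph{additive} correction $Z=V+W$ of the gradient and leave its existence as something you ``expect to be solvable.'' The correction that actually works is \emph{multiplicative}: $V=i e^{\cK\varphi/(n+1)}\grad(\varphi)$ (Theorem~\ref{thm:existence}, i.e.\ Theorem~3.2 of \cite{Choi-Lee2021}). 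Differentiating the constant $\norm{\partial\varphi}_\omega^2$ gives the identity $\varphi_{\alpha;\beta}\varphi^\alpha=-\varphi_\beta$, and with it one computes $\norm{\nabla''V}_\omega^2=e^{2\cK\varphi/(n+1)}\bigl(\tfrac{\cK}{n+1}\norm{\partial\varphi}_\omega^2-1\bigr)^2\equiv 0$, which vanishes exactly at the critical value — so the conformal factor $e^{\cK\varphi/(n+1)}$ is the whole point, not a cohomological solvability statement. Note also that $\norm{V}_\omega$ is \emph{not} constant (it grows like $e^{\cK\varphi/(n+1)}$), so your completeness argument via constant length does not apply to the corrected field; the paper instead observes that $\RE V$ is tangent to the level sets of $\varphi$ and is a constant multiple of the complete, constant-length field $i\,\grad(\varphi)$ on each level set.

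Second, and more seriously, your endgame is circular: the ``boundaryless Wong--Rosay theorem'' you invoke is Theorem~\ref{thm:main2}, which the paper \emph{deduces from} Theorem~\ref{thm:main1}; and in any case non-compactness of $\Aut(X)$ together with cocompactness does not place you in the hypotheses of that theorem (which require a localizing neighborhood and the asymptotic gradient condition). The paper's actual route is structural: by Nadel--Frankel, $X$ splits as $X_1\times X_2$ with $X_1$ Hermitian symmetric of noncompact type and $\Aut(X_2)$ discrete; the flow of $V$ splits by the de Rham decomposition, its $X_2$-component must be the identity by discreteness, so $\grad(\varphi)$ is everywhere tangent to $X_1$; then $d\varphi$ annihilates $\CC TX_2$ and hence $\omega=d\dc\varphi$ degenerates on $TX_2$ unless $X_2$ is trivial. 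This shows $X$ is a bounded symmetric domain, and only then does Lemma~\ref{lem:gradient length of BSDs} (via $L_\Omega\geq rc_\Omega>n+1$ for every domain other than $\UB^n$) identify it as the ball. Your proposal uses that lemma only as a consistency check, whereas it is the step that actually pins down $\UB^n$.
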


By S.~T.~Yau~\cite{Yau1978}, a compact K\"ahler manifold with ample canonical bundle has a \KE metric with negative Ricci curvature, so the lifted metric structure to its covering is complete.  On the other way, Yau's Schwarz Lemma (\cite{Yau1978-1}) implies that a compact quotient of a complete \KE manifold with negative Ricci curvature is a Riemannian quotient, so its canonical bundle is positive. Thus Theorem~\ref{thm:main1} characterizes the unit ball in the class of universal coverings of compact K\"ahler manifolds with ample canonical bundle. Usually, $-1$ or $-(n+1)$ have been used as a normalized condition to the Ricci curvature for the uniqueness and the biholomorphical invariance, but we shall take an arbitrary negative constant $-\cK$ because the minimal value $2(n+1)/\cK$ depends on $\cK$ and we would like to indicate where the Ricci curvature is involved.

Due to the Nadel-Frankel theorem (\cite{Nadel1990,Frankel1995}), the manifold $X$ in Theorem~\ref{thm:main1} is a product space of a bounded symmetric domain and a complex manifold of discrete automorphism group (a rigid factor) since it covers a compact K\"ahler manifold with ample canonical bundle. On the other hand, the condition to the gradient length of the potential function $\varphi$ allows us to construct a complete holomorphic vector field from the gradient vector field $\grad(\varphi)$ (Theorem~3.2 in \cite{Choi-Lee2021}, see also Theorem~\ref{thm:existence} of this manuscript). We will mainly show in the proof (Section~\ref{subsec:proof of thm1}) that the specific construction of $V$ forces the rigid factor of $X$ to be trivial, so $X$ is equivalent to a bounded symmetric domain. Lemma~\ref{lem:gradient length of BSDs}, a characterization of the unit ball among bounded symmetric domains, implies that the unit ball is the only candidate to be biholomorphic to $X$.

\medskip

Theorem~\ref{thm:main1} gives an intrinsic generalization of the Wong-Rosay theorem (\cite{Wong1977,Rosay1979}) which says that \textit{a smoothly bounded domain in $\CC^n$ which admits a compact quotient is biholomorphically equivalent to the unit ball}.

Let $\Omega$ be  a bounded domain  in $\CC^n$ with $C^2$-smooth boundary. If the holomorphic automorphism group $\Aut(\Omega)$ of $\Omega$ admits  a discrete,  torsion-free, cocompact subgroup  $\Gamma$, that is, the quotient complex space $\Gamma\backslash\Omega$ is compact, then there is a sequence $\set{f_j}\subset\Gamma$  whose orbit $\set{f_j(p)}$ for $p\in\Omega$ accumulates at a strongly pseudoconvex boundary point $q\in\partial\Omega$. In \cite{Wong1977,Rosay1979},  it is proved that the asymptotic value of  the ratio of Eisenman-Kobayashi and Carath\'eodory measures at a strongly pseudoconvex boundary point is $1$. Since the ratio is invariant under automorphisms, one can see the ratio at $p$ is $1$ so the domain $\Omega$ is biholomorphic to the unit ball.  From an improvement by Efimov~\cite{Efimov1995}, we can directly construct a biholomorphism to the unit ball using the affine rescaling method even in case of  a complex manifold with boundary (see \cite{Gaussier-Kim-Krantz2002}).

Here, we will try to extend the Wong-Rosay theorem to a complex manifold without boundary. Let us go back to the domain $\Omega\subset\CC^n$ and its cocompact subgroup $\Gamma\subset\Aut(\Omega)$ as above. Admitting a compact quotient $\Gamma\backslash\Omega$ implies that $\Omega$ is pseudoconvex so has a complete \KE metric with Ricci curvature $-\cK$ by Cheng-Yau~\cite{Cheng-Yau1980} and Mok-Yau~\cite{Mok-Yau1983}. Then the gradient length $\norm{d\varphi}_\omega^2$ of the canonical potential function $\varphi$ of $\omega$ is uniformly bounded near  a strongly  pseudoconvex boundary point $q$ and the asymptotic value at $q$ is $2(n+1)/\cK$:
\begin{equation*}
\lim_{\afrac{z\to q}{z\in\Omega}} \norm{d\varphi}_\omega^2(z) =\frac{2(n+1)}{\cK}
\end{equation*}
(see Proposition~\ref{prop:boundary behavior}). Using the method of potential rescaling as in \cite{LKH2021}, we can construct a global potential function $\tilde\varphi$ of $\omega$ whose gradient length is the minimal constant:
\begin{equation*}
\norm{d\tilde\varphi}_\omega^2\equiv \frac{2(n+1)}{\cK} \;.
\end{equation*}
Since $\Omega$ is simply connected (\cite{Wong1977}), Theorem~\ref{thm:main1} implies that $\Omega$ is the unit ball up to biholomorphic equivalence. More generally, we have
\begin{theorem}\label{thm:main2}
Let $X^n$ be a complex manifold equipped with the complete K\"ahler-Einstein metric $\omega$ with negative Ricci curvature $-\cK$ and suppose that there is  a discrete,  torsion-free, cocompact subgroup  $\Gamma$  of $\Aut(X)$. If
\begin{enumerate}
\item there is a sequence $\set{f_j}\subset\Gamma$ with a localizing neighborhood $U$;
\item there is a local potential function  $\varphi:U\to\RR$ of $\omega$ which has uniformly bounded  gradient length and satisfies 
\begin{equation}\label{cond:boundary behavior}
\lim_{j\to\infty} \norm{d\varphi}_\omega^2 (f_j(x)) = \frac{2(n+1)}{\cK} \quad\text{for any $x\in X$,}
\end{equation}
\end{enumerate}
then $X$ is a quotient space of the unit ball $\UB^n$. If $U$ is simply connected in addition, then $X$ is biholomorphic to the unit ball. 
\end{theorem}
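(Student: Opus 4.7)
The plan is to pull the local potential $\varphi$ back along the orbit $\{f_j\}$, produce in the limit a global $\omega$-potential $\tilde\varphi$ on $X$ whose gradient length is the constant $2(n+1)/\cK$, and then apply Theorem~\ref{thm:main1} to the universal cover of $X$. Concretely, I would set $\varphi_j:=\varphi\circ f_j$ on $U_j:=f_j^{-1}(U)$. Each $f_j$ is an isometry of $\omega$, so $\iddbar\varphi_j=\omega$ on $U_j$ and
\begin{equation*}
\norm{d\varphi_j}_\omega^2(x)=\norm{d\varphi}_\omega^2\bigl(f_j(x)\bigr);
\end{equation*}
in particular the uniform bound on $\norm{d\varphi}_\omega$ over $U$ transfers to a uniform bound on $\norm{d\varphi_j}_\omega$ over $U_j$, and the localizing property of $U$ guarantees that every compact $K\subset X$ is contained in $U_j$ for all sufficiently large $j$.

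Next I would subtract constants $c_j$ so that $\varphi_j-c_j$ vanishes at a fixed base point $x_0\in X$ (which lies in $U_j$ for $j$ large). The uniform gradient estimate, combined with path integration of $d\varphi_j$, yields a $C^0$-bound for $\varphi_j-c_j$ on every compact $K\subset X$, and the identity $\iddbar(\varphi_j-c_j)=\omega$ upgrades this to $C^\infty_{\mathrm{loc}}$-bounds by standard elliptic regularity. A diagonal subsequence therefore converges in $C^\infty_{\mathrm{loc}}(X)$ to a smooth function $\tilde\varphi$ with $\iddbar\tilde\varphi=\omega$, i.e.\ a global $\omega$-potential on $X$; the pointwise hypothesis~\eqref{cond:boundary behavior} together with $C^1$-convergence then forces
\begin{equation*}
\norm{d\tilde\varphi}_\omega^2\equiv\frac{2(n+1)}{\cK}.
\end{equation*}

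Finally I would pass to the universal cover $\pi\colon\widetilde X\to X$. Then $(\widetilde X,\pi^*\omega)$ is simply connected, complete K\"ahler-Einstein of Ricci curvature $-\cK$, and covers the compact quotient $\Gamma\backslash X$, whose canonical bundle is ample by Yau's Schwarz Lemma; moreover $\tilde\varphi\circ\pi$ is a global $\pi^*\omega$-potential of constant gradient length $2(n+1)/\cK$. Theorem~\ref{thm:main1} therefore yields $\widetilde X\cong\UB^n$, whence $X$ is a quotient of $\UB^n$. If in addition $U$ is simply connected, then so is each $U_j=f_j^{-1}(U)$, and since the $U_j$ exhaust $X$ any loop in $X$ already contracts inside some $U_j$; thus $X$ itself is simply connected, and the full conclusion $X\cong\UB^n$ follows.

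The hard part will be the $C^\infty_{\mathrm{loc}}$-compactness step for $\{\varphi_j-c_j\}$. Only an eventual portion of the sequence sits over any given compact $K\subset X$, and the normalizing constants $c_j$ may individually diverge; one has to exploit both the uniform gradient bound (to get $C^0$-estimates on $K$ once $K\subset U_j$) and the fact that $\omega$ is a fixed background metric, so that $\iddbar(\varphi_j-c_j)=\omega$ provides uniform bounds on derivatives of all orders, in order to extract the desired limit.
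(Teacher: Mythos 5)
Your proposal is correct and follows essentially the same route as the paper: pull back $\varphi$ along the orbit, normalize at a base point, use the uniform gradient bound for $C^0$-control, extract a $C^\infty_{\mathrm{loc}}$-limit which is a global potential with constant gradient length $2(n+1)/\cK$, lift to the universal cover and apply Theorem~\ref{thm:main1}, with the localizing property of the simply connected $U_j=f_j^{-1}(U)$ forcing $\pi_1(X)$ to vanish. The only (harmless) deviation is in the compactness step, where you invoke elliptic regularity for $\iddbar(\varphi_j-c_j)=\omega$ while the paper subtracts a fixed local potential to get pluriharmonic differences and applies Montel's theorem to their holomorphic completions; both arguments are standard and equivalent here.
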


Here, a \emph{localizing neighborhood} $U$ of $\set{f_j}$ means that  $f_j(K)\subset U$ for any compact subset $K\subset X$ with sufficiently large $j$. Thus the value $\norm{d\varphi}_\omega^2 (f_j(x))$ in \eqref{cond:boundary behavior} is defined well. In case of a domain $\Omega\subset\CC^n$ with a cocompact subgroup $\Gamma$, an intersection $\Omega\cap U$ for any open neighborhood $U$ of a strongly pseudoconvex boundary point $q\in\partial\Omega$ should be a localizing neighborhood of a sequence $\set{f_j}\subset\Aut(\Omega)$ whose orbit accumulates at $q$. As we mentioned, Condition~\eqref{cond:boundary behavior} also holds for $\set{f_j}$.

In the proof of Theorem~\ref{thm:main2} (Section~\ref{subsec:proof of thm2}), we will apply the method of potential rescaling in \cite{LKH2021,Choi-Lee2021} to construct a global potential function with constant gradient length $2(n+1)/\cK$. Then Theorem~\ref{thm:main1} allows us to conclude that the universal covering space is equivalent to the unit ball.

\medskip

The organization of this paper is as follows. We will introduce the gradient length of potentials, some relevant identities and basic materials for the main results in Section~\ref{sec:section2}; especially the characterization of the unit ball among bounded symmetric domains by the minimal length constant $2(n+1)/\cK$ (Section~\ref{subsec:minimal length of gradient}) and the existence of a complete holomorphic vector field in case of the minimal length constant (Section~\ref{subsec:existence of a complete holomorphic vector field}, see also \cite{Choi-Lee2021}). Then we will prove Theorem~\ref{thm:main1} and Theorem~\ref{thm:main2} in Section~\ref{sec:proof of main results}.



\section{A \KE potential with constant gradient length and the existence of a complete holomorphic vector field}\label{sec:section2}
We will discuss now a negatively curved, complete \KE manifold admitting a global potential function with constant gradient length. In Lemma~\ref{lem:gradient length of BSDs}, we will characterize the unit ball among bounded symmetric domains by a minimal constant of gradient length.  Then the existence theorem of a complete holomorphic vector field in case of the minimal constant will be introduced.

\medskip

Throughout this section, $X^n$ is an $n$-dimensional complex manifold  and $\omega$ is its complete \KE metric with negative Ricci curvature $-\cK$, that is,
\begin{equation*}
\Ric(\omega)=-\cK\omega\quad 
\end{equation*}
where $\omega$ also stands for its K\"ahler form. We will employ $z=(z^1,\ldots,z^n)$ as a local coordinate system for local expressions of quantities. In this local coordinates, $(g_{\alpha\bar\beta})$ stands for the metric tensor of $\omega$:
\begin{equation*}
\omega = \im g_{\alpha\bar\beta} dz^\alpha\wedge dz^{\bar\beta}
\end{equation*}
where the indices $\alpha,\beta,\ldots$ run from $1$ to $n$ and the summation convention for duplicated indices is always assumed. We denote the complex conjugate of a tensor by taking the bar on the indices: $\overline{z^\alpha} = z^{\bar\alpha}$, $\overline{g_{\alpha\bar\beta}} = g_{\bar\alpha\beta}$. We will also use the matrix $(g_{\alpha\bar\beta})$ and  its inverse matrix $(g^{\bar\beta\alpha})$ to raise and lower indices: 
\begin{equation*}
\varphi^\alpha = g^{\alpha\bar\beta}\varphi_{\bar\beta}\;, \quad \ind{R}{\beta}{\alpha}{\mu\bar\nu}=g^{\bar\gamma\alpha}R_{\beta\bar\gamma\mu\bar\nu} \;.
\end{equation*}


\subsection{Gradient length of a potential function}
A \emph{potential function} $\varphi$ of $\omega$ is a local smooth function satisfying
\begin{equation*}
d\dc\varphi = \omega \;.
\end{equation*}
Here $\dc=\frac{\im}{2}(\bar\partial-\partial)$ so $d\dc=\im\partial\bar\partial$. Locally, a potential function is naturally given by
\begin{equation}\label{eqn:canonical potential}
\varphi=\frac{1}{\cK}\log\det(g_{\alpha\bar\beta})
\end{equation}
since $\omega$ is \KE and the Ricci form is  written by $$\Ric(w)=-d\dc\log\det(g_{\alpha\bar\beta}).$$ For a bounded pseudoconvex domain $\Omega$ in $\CC^n$ and its \KE metric $\omega$, the function given by \eqref{eqn:canonical potential} with respect to the standard coordinates of $\CC^n$ is a global potential function of $\omega$, so called a \emph{canonical potential} of $(\Omega,\omega)$.

The \emph{gradient length} of a potential function $\varphi$ is the length of the $1$-form $d\varphi$ measured by $\omega$ which is locally expressed by
\begin{equation*}
\norm{d\varphi}_\omega^2 = \norm{\varphi_\alpha dz^\alpha+\varphi_{\bar\beta} dz^{\bar\beta}}_\omega^2= 2\varphi_\alpha\varphi_{\bar\beta}g^{\alpha\bar\beta} = 2\varphi_\alpha\varphi^\alpha
\end{equation*}
where $\varphi_\alpha=\pdl{\varphi}{z^\alpha}$, $\varphi_{\bar\beta}=\pdl{\varphi}{z^{\bar\beta}}$. For the sake of simplicity, we will often employ 
\begin{equation*}
\norm{\partial\varphi}_\omega^2 = \varphi_\alpha\varphi^\alpha =\frac{1}{2} \norm{d\varphi}_\omega^2
\end{equation*}
as a gradient length.

\begin{remark}\label{rmk:change of Ricci}
If $\omega'$ is another complete \KE metric with Ricci curvature $-\cK'$, then by the uniqueness of \KE metric (\cite{Yau1978-1}), we have
\begin{equation*}
\cK\omega = \cK'\omega'
\end{equation*}
since the Ricci curvature tensor is invariant under the scalar multiplication to the metric so
\begin{equation*}
-\cK\omega= \Ric(\omega)=\Ric(\omega')=-\cK'\omega' \;.
\end{equation*}
Given potential $\varphi'$ of $\omega'$, a corresponding potential of $\omega$ is of the form $\varphi=\cK'\varphi'/\cK$, so we have a relation of their gradient length:
\begin{equation}\label{eqn:change of Ricci}
\norm{\partial\varphi}_\omega^2 = \paren{\frac{\cK'}{\cK}}^2\norm{\partial\varphi'}_\omega^2 = \frac{\cK'}{\cK}\norm{\partial\varphi'}_{\omega'}^2 \;.
\end{equation}
\end{remark}


\subsection{Boundary behavior of gradient length}\label{subsec:boundary behavior of gradient length}
Let us consider the \KE metric of the unit ball $\UB^n=\set{z\in \CC^n:\norm{z}<1}$. The defining function $\rho(z)=\norm{z}^2-1$ gives a potential function
\begin{equation}\label{eqn:canonical potential of UB}
\varphi_\rho
	=-\log(-\rho)
	=-\log\paren{1-\norm{z}^2}
	=\frac{1}{n+1}\log\frac{1}{\paren{1-\norm{z}^2}^{n+1}}
\;,
\end{equation}
of the complete \KE metric $\omega_{\UB^n}$ with Ricci curvature $-(n+1)$ (i.e. $\cK=n+1$), whose metric tensor is given by
\begin{equation*}
g_{\alpha\bar\beta}^{\UB^n}
	=\frac{n+1}{\paren{1-\norm{z}^2}^2}
	\paren{
		\delta_{\alpha\bar\beta}(1-\norm{z}^2)
		+z^{\bar\alpha}z^\beta
	} \;.
\end{equation*}
One can easily see that $\varphi_\rho$ is the canonical potential function of $\omega^{\UB^n}$. From
\begin{equation*}
g^{\alpha\bar\beta}_{\UB^n}
	=\frac{(1-\norm{z}^2)}{n+1}
	\paren{
		\delta^{\alpha\bar\beta}-z^\alpha z^{\bar\beta}
	}
	\;,
\end{equation*}
we have
\begin{equation*}
\norm{\partial\varphi_\rho}_{\omega}^2
	=\norm{z}^2
	\;.
\end{equation*}
This implies that the boundary value of $\norm{\partial\varphi_\rho}_{\omega}^2$ is $1$. For the \KE metric $\omega=\frac{n+1}{\cK}\omega_{\UB^n}$ with Ricci curvature $-\cK$ and its potential function $\varphi= \frac{n+1}{\cK}\varphi_\rho$, one can see that 
\begin{equation*}
\norm{\partial\varphi}_\omega^2(z) \to \frac{n+1}{\cK} \;.
\end{equation*}
as $z$ tends to $\partial\UB^n$ from \eqref{eqn:change of Ricci}.

In Proposition~4.3 in \cite{Choi-Lee2021}, we proved that such boundary behavior of gradient length still holds for strongly pseudoconvex bounded domains. From \cite{Gontard2019}, we can also have the same estimate near a strongly pseudoconvex boundary point.
\begin{proposition}\label{prop:boundary behavior}
Let $\Omega\subset\CC^n$ be a bounded pseudoconvex domain with $C^k$ boundary where $k\geq \max\set{2n+9,3n+6}$ and let $\varphi$ be the canonical potential of the complete \KE metric $\omega$ with Ricci curvature $-\cK$. If $q\in\partial\Omega$ is a strongly pseudoconvex boundary point, then
\begin{equation*}
\norm{\partial\varphi}_\omega^2(z) \to\frac{n+1}{\cK}
\end{equation*}
as $z\to q$. 
\end{proposition}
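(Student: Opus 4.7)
The plan is to normalize to $\cK = n+1$ (so the target limit becomes $1$) and then invoke a local boundary expansion of the \KE potential near $q$. By Remark~\ref{rmk:change of Ricci} and \eqref{eqn:change of Ricci}, if $\omega'$ and $\varphi'$ denote the complete \KE metric with Ricci curvature $-(n+1)$ and its canonical potential, then $\norm{\partial\varphi}_\omega^2 = \tfrac{n+1}{\cK}\norm{\partial\varphi'}_{\omega'}^2$; hence the problem reduces to showing $\norm{\partial\varphi'}_{\omega'}^2(z)\to 1$ as $z\to q$.

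Under this normalization, $\varphi'$ satisfies $\det(\varphi'_{\alpha\bar\beta}) = e^{(n+1)\varphi'}$, so $u := e^{-\varphi'}$ is a negative defining function of $\Omega$ solving Fefferman's complex Monge-Amp\`ere equation. Under the regularity hypothesis $k\geq\max\set{2n+9,3n+6}$, the local boundary estimate of Gontard \cite{Gontard2019} provides, in a neighborhood of $q$, a smooth Fefferman-type defining function $\rho$ together with an expansion $u = -\rho + (\mathrm{error})$ whose first and second derivatives are controlled sharply enough to compute the gradient length of $\varphi'$ pointwise near $q$. This is the local counterpart of the global expansion on smoothly bounded strongly pseudoconvex domains used in Proposition~4.3 of \cite{Choi-Lee2021}.

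It then remains to carry out the calculation for the leading model $\varphi_0 := -\log(-\rho)$, in parallel with the unit ball computation of Section~\ref{subsec:boundary behavior of gradient length}. One has
$$(g_0)_{\alpha\bar\beta} = \frac{1}{\rho^2}\bigl(\rho_\alpha\rho_{\bar\beta} - \rho\,\rho_{\alpha\bar\beta}\bigr),$$
and the Sherman-Morrison inversion of this rank-one perturbation of $-\rho\cdot(\rho_{\alpha\bar\beta})$ yields
$$\norm{\partial\varphi_0}_{g_0}^2 = \frac{P}{1+P}\,, \qquad P := -\frac{1}{\rho}L^{\alpha\bar\beta}\rho_\alpha\rho_{\bar\beta},$$
where $(L^{\alpha\bar\beta})$ is the inverse of the Hermitian matrix $(\rho_{\alpha\bar\beta})$. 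Since $q$ is strongly pseudoconvex, $(\rho_{\alpha\bar\beta})$ is positive definite near $q$ and $d\rho(q)\ne 0$, so $L^{\alpha\bar\beta}\rho_\alpha\rho_{\bar\beta}$ is bounded below by a positive constant there; therefore $P\to+\infty$ and $P/(1+P)\to 1$ as $z\to q$. Gontard's derivative estimates guarantee that the error terms contribute only lower order to both the metric and the potential, so the same limit holds for $\norm{\partial\varphi'}_{\omega'}^2$.

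The main obstacle is the precise boundary regularity of the Cheng-Yau--Mok-Yau potential at a merely $C^k$ strongly pseudoconvex point, namely the decay of $u+\rho$ together with its first and second derivatives at a rate sufficient to offset the blow-up of $(-\rho)^{-1}$; this is the content of \cite{Gontard2019} and is the source of the regularity bound $k\geq\max\set{2n+9,3n+6}$ appearing in the statement.
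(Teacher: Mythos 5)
Your proposal is correct and follows essentially the same route as the paper's proof: both decompose the canonical potential near $q$ as $-\log(-\rho)$ plus a correction term, use Gontard's local approximate solution and boundary decay estimates (the source of the $k\geq\max\set{2n+9,3n+6}$ hypothesis) to show the correction is lower order, and reduce to an explicit computation of the gradient length of the leading term, for which your Sherman--Morrison identity $\norm{\partial\varphi_0}^2=P/(1+P)$ with $P\to+\infty$ is the calculation the paper leaves implicit by reference to \cite{Choi-Lee2021}. The only cosmetic slip is the sign of $u=e^{-\varphi'}$ (which is positive, vanishing at the boundary; the negative defining function is $-e^{-\varphi'}$), and none of this affects the argument.
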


\begin{proof}
This is proved in \cite{Choi-Lee2021} when $\Omega=\{z\in\CC^n:r(z)<0\}$ is a bounded strongly pseudoconvex domain with a smooth boundary by using the boundary behavior of the solution of the following complex Monge-Ampere equation \cite{Cheng-Yau1980}:
\begin{equation}\label{E:CMAE}
\begin{aligned}
\paren{\omega+dd^cu}^n&=e^{\cK u+F}\omega^n,
\\
\omega+dd^cu&>0,
\end{aligned}
\end{equation}
where $\omega=-\log(-r)$ and $F=\log\det(r_{\alpha\bar\beta})(-r+\abs{dr}^2)$.
More precisely, if $r$ is a defining function, which is called an approximate solution of the complex Monge-Ampere equation, satisfying
\begin{equation*}
F
=
\log\det(r_{\alpha\bar\beta})(-r+\abs{\partial r}^2)
=
O(\abs{r}^{n+1}),
\end{equation*}
then $u$ satisfies that
\begin{equation}\label{E:boundry_behavior}
\abs{D^pu}(x)=O(\abs r^{n+1/2-p-\varepsilon})
\quad\text{for}\quad \varepsilon>0.
\end{equation}
Since $\varphi=-\log(-r)+u$, one can easily compute the boundary behavior of $\norm{\partial\varphi}^2$.

In case that $\Omega$ is a bounded pseudoconvex domain with the hypothesis, Gontard proved that there exists a local approximate solution $r$ in a neighborhood $U\subset\CC^n$ of $q\in\partial\Omega$ (\cite{Gontard2019}). He also proved that the local solution $u$ of \eqref{E:CMAE} in $U\cap\Omega$ with a local approximate solution $r$ satisfies the boundary behavior \eqref{E:boundry_behavior}. Then the same computation as in the previous case gives the conclusion. (For the detailed proof, see Section 4 in \cite{Choi-Lee2021}.)
\end{proof}

\subsection{The minimal value of constant gradient length}\label{subsec:minimal length of gradient}
For a bounded symmetric domain $\Omega$ in $\CC^n$, the Bergman metric $\omega_\Omega=d\dc\log K_\Omega$ given by the Bergman kernel function $K_\Omega$ is the complete \KE metric. As an extended study of \cite{Donnelly-Fefferman1983} for the $L^2$-cohomology vanishing, Donnelly~\cite{Donnelly1997} proved that the potential function $\log K_\Omega$  has bounded gradient length in order to get a kind of K\"ahler-hyperbolicity of the Bergman metric (\cite{Gromov1991}). In the paper of Kai-Ohsawa (\cite{Kai-Ohsawa2007}), they considered the Cayley transform to a Siegel domain of the second kind. Since a homogeneous Siegel domain of the second kind is affine-homogeneous, its Bergman kernel function generates a potential function with constant gradient length. Moreover a value of constant gradient length does not depend on any choice of potentials (see also Theorem~\ref{thm:Kai-Ohsawa}). Under the normalized condition $-\cK$ to the Ricci curvature, we will prove that \emph{the gradient length constant of the unit ball is minimal among bounded symmetric domains.}

\medskip

Let us denote by $\Delta_\omega$ the Laplace-Beltrami operator  of $\omega$ with non-positive eigenvalues which is locally written by
\begin{equation*}
\Delta_\omega = g^{\alpha\bar\beta}\nabla_\alpha\nabla_{\bar\beta}
\end{equation*}
where $\nabla$ is the covariant derivative of $\omega$  and $\nabla_\alpha=\nabla_{\pdl{}{z^\alpha}}$, $\nabla_{\bar\beta}=\nabla_{\pdl{}{z^{\bar\beta}}}$. 

\begin{proposition}[Proposition~3.1 in \cite{Choi-Lee2021}]\label{P:PDE}
Let $\varphi$ be a local potential function of a complete \KE manifold $(X^n,\omega)$ with negative Ricci curvature $-\cK$. Then  
\begin{equation*}
	\Delta_\omega \norm{\partial\varphi}^2_\omega
	=
	\norm{\nabla'^2 \varphi}_\omega^2
	+
	n-\cK\norm{\partial\varphi}^2_\omega \;.
\end{equation*}
\end{proposition}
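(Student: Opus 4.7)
The plan is a direct tensor calculation that rides on two facts particular to our setting: the potential condition $\varphi_{\alpha\bar\beta}=g_{\alpha\bar\beta}$ (so the mixed Hessian of $\varphi$ is the metric itself, hence parallel) and the Einstein identity $R_{\alpha\bar\beta}=-\cK g_{\alpha\bar\beta}$. Everything else is the Leibniz rule together with the K\"ahler commutator of $\nabla_\mu$ and $\nabla_{\bar\nu}$ on a $(1,0)$-covector.

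First I would differentiate $\norm{\partial\varphi}_\omega^2=\varphi^\alpha\varphi_\alpha$. Because on a K\"ahler manifold the only nonzero Christoffel symbols are $\Gamma^\gamma_{\alpha\beta}$ and its conjugate, one has $\nabla_\mu\varphi_{\bar\beta}=\varphi_{\mu\bar\beta}=g_{\mu\bar\beta}$, so Leibniz collapses to
\begin{equation*}
\nabla_\mu\norm{\partial\varphi}_\omega^2=\varphi_\mu+\varphi^\alpha\,\nabla_\mu\nabla_\alpha\varphi.
\end{equation*}
Applying $\nabla_{\bar\nu}$ and tracing with $g^{\mu\bar\nu}$ then splits $\Delta_\omega\norm{\partial\varphi}_\omega^2$ into three contributions. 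The derivative of $\varphi_\mu$ yields $g^{\mu\bar\nu}g_{\mu\bar\nu}=n$. The cross term $g^{\mu\bar\nu}(\nabla_{\bar\nu}\varphi^\alpha)(\nabla_\mu\nabla_\alpha\varphi)$, after writing $\nabla_{\bar\nu}\varphi^\alpha=g^{\alpha\bar\beta}\nabla_{\bar\nu}\nabla_{\bar\beta}\varphi$, is exactly the squared $(2,0)$-Hessian norm $\norm{\nabla'^2\varphi}_\omega^2$. The remainder is the third-derivative term $g^{\mu\bar\nu}\varphi^\alpha\nabla_{\bar\nu}\nabla_\mu\nabla_\alpha\varphi$.

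The one nontrivial step is to evaluate this last term. Commuting via the K\"ahler identity $\nabla_{\bar\nu}\nabla_\mu\varphi_\alpha=\nabla_\mu\nabla_{\bar\nu}\varphi_\alpha+\ind{R}{\alpha}{\gamma}{\mu\bar\nu}\varphi_\gamma$, the first summand equals $\nabla_\mu g_{\alpha\bar\nu}=0$ by parallelism of the metric, so only the curvature piece survives. Its contraction with $g^{\mu\bar\nu}\varphi^\alpha$ produces $\varphi^\alpha R_{\alpha}{}^{\gamma}\varphi_\gamma$, which by the Einstein equation collapses to $-\cK\norm{\partial\varphi}_\omega^2$. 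Summing the three contributions gives the stated formula. The main obstacle is merely bookkeeping: fixing sign conventions in the K\"ahler commutator and verifying that the $(\mu,\bar\nu)$-trace produces the Ricci tensor under the paper's index conventions; once these are pinned down, the identity falls out immediately.
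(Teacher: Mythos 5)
Your computation is correct: the decomposition into the trace term $n$, the $(2,0)$-Hessian norm, and the commutator/Ricci term, using $\nabla_{\bar\beta}\varphi_\alpha=g_{\alpha\bar\beta}$ and the Einstein condition, is exactly the standard Bochner-type argument, and it is consistent with the identities the paper uses elsewhere (e.g.\ $\ind{\varphi}{}{\alpha}{;\bar\beta}=\ind{\delta}{}{\alpha}{\bar\beta}$ in the display preceding \eqref{E:Key_equation}). The paper itself does not reprove this proposition but cites it to \cite{Choi-Lee2021}; your argument is the expected one and I see no gap beyond the sign conventions you already flag.
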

Here, $\nabla'$ is the $(1,0)$-part of $\nabla$. The length $\norm{\nabla'^2 \varphi}_\omega^2$   can be locally written by
\begin{multline*}
\norm{\nabla'^2 \varphi}_\omega^2 
= \norm{(\nabla_\beta\nabla_\alpha\varphi )dz^\alpha\otimes dz^\beta}_\omega^2 
= \norm{\varphi_{\alpha;\beta}dz^\alpha\otimes dz^\beta}_\omega^2 
\\
= \varphi_{\alpha;\beta}\varphi_{\bar\lambda;\bar\mu}g^{\alpha\bar\lambda}g^{\beta\bar\mu}
= \varphi_{\alpha;\beta} \varphi^{\alpha;\beta} \;
\end{multline*}
and coincides with the trace of the semi-positive symmetric operator
\begin{equation}\label{eqn:symmetric operator}
\ind{\varphi}{}{\alpha}{;\bar\beta}\ind{\varphi}{}{\bar\beta}{;\gamma} \pd{}{z^\alpha}\otimes dz^\gamma 
\end{equation}
of $T^{(1,0)}X$.

\medskip

Suppose that  $\norm{\partial\varphi}_\omega^2$ is locally constant. Then we have
\begin{multline*}
	0
	=
	\partial\paren{\norm{\partial\varphi}_\omega^2}
	=(\varphi_\alpha \varphi^\alpha)_{;\beta}dz^\beta
	=\paren{
		\varphi_{\alpha;\beta}\varphi^\alpha
		+\varphi_\alpha\ind{\varphi}{}{\alpha}{;\beta}
		}
		dz^\beta
	\\
	=\paren{
		\varphi_{\alpha;\beta}\varphi^\alpha
		+\varphi_\alpha\ind{\delta}{}{\alpha}{\beta}
		}dz^\beta
	=
	\paren{
		\varphi_{\alpha;\beta}\varphi^\alpha
		+
		\varphi_\beta
	}dz^\beta.
\end{multline*}
It follows that
\begin{equation}\label{E:Key_equation}
\varphi_{\alpha;\beta}\varphi^\alpha = \varphi_{\beta;\alpha}\varphi^\alpha
=
-\varphi_\beta.
\end{equation}
This means that at each point of $X$ where $\varphi$ is defined, the gradient vector
\begin{equation*}
\mathrm{grad}(\varphi) = \varphi^\alpha\pd{}{z^\alpha} = g^{\alpha\bar\beta}\varphi_{\bar\beta}\pd{}{z^\alpha}
\end{equation*}
is an eigenvector of the semi-positive symmetric operator in \eqref{eqn:symmetric operator} with the eigenvalue $1$. Therefore $\norm{\nabla'^2 \varphi}_\omega^2\geq 1$. As a conclusion, we have
\begin{equation*}
0=\Delta_\omega \norm{\partial\varphi}_\omega^2
	=
	\norm{\nabla'^2 \varphi}_\omega^2
	+
	n
	-\cK\norm{\partial\varphi}_\omega^2
	\geq
	1+n-\cK\norm{\partial\varphi}_\omega^2
	\;.
\end{equation*}
This implies the following.
\begin{proposition}\label{prop:lower bound of length}
Let $\varphi$ be a local potential function of the \KE metric $\omega$ with negative Ricci curvature $-K$. 
If the length $\norm{\partial\varphi}_\omega$  is constant, then
\begin{equation*}
\norm{\partial\varphi}_\omega^2 \geq \frac{n+1}{\cK} \;.
\end{equation*}
\end{proposition}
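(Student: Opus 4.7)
My plan is to combine the Bochner-type identity in Proposition~\ref{P:PDE} with a first-derivative consequence of the constancy of $\norm{\partial\varphi}_\omega^2$. Since $\norm{\partial\varphi}_\omega^2$ is constant, $\Delta_\omega\norm{\partial\varphi}_\omega^2 = 0$, so Proposition~\ref{P:PDE} collapses to the algebraic identity
$$\norm{\nabla'^2\varphi}_\omega^2 = \cK\norm{\partial\varphi}_\omega^2 - n.$$
Hence the proposition is equivalent to the pointwise lower bound $\norm{\nabla'^2\varphi}_\omega^2 \geq 1$.

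To produce such a bound I would differentiate $\varphi_\alpha\varphi^\alpha = \mathrm{const}$ in the $\partial/\partial z^\beta$ direction. The crucial identity is $\nabla_\beta\varphi^\alpha = g^{\alpha\bar\gamma}\varphi_{\bar\gamma;\beta} = g^{\alpha\bar\gamma}g_{\beta\bar\gamma} = \delta^\alpha_\beta$, whose middle equality is precisely the K\"ahler potential condition $\varphi_{\beta\bar\gamma}=g_{\beta\bar\gamma}$. This yields
$$0 = \varphi_{\alpha;\beta}\varphi^\alpha + \varphi_\alpha\,\delta^\alpha_\beta, \qquad \text{i.e.,} \qquad \varphi_{\alpha;\beta}\varphi^\alpha = -\varphi_\beta.$$

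Next I would interpret this identity as an eigenvalue statement for the semi-positive self-adjoint endomorphism $A^\alpha_\gamma := \ind{\varphi}{}{\alpha}{;\bar\beta}\ind{\varphi}{}{\bar\beta}{;\gamma}$ of $T^{(1,0)}X$. Using the K\"ahler symmetry $\varphi_{\alpha;\beta}=\varphi_{\beta;\alpha}$ together with the conjugate of the identity above, a direct index manipulation produces $A^\alpha_\gamma\varphi^\gamma = \varphi^\alpha$; in other words, $\grad(\varphi)$ is an eigenvector of $A$ with eigenvalue $1$. Moreover $\grad(\varphi)$ is nowhere zero on the domain of $\varphi$, because $\norm{\partial\varphi}_\omega^2 \equiv 0$ would force $d\varphi \equiv 0$ and hence $\omega = d\dc\varphi = 0$. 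Since $A$ is semi-positive self-adjoint with this nontrivial eigenvalue, its trace — which coincides with $\norm{\nabla'^2\varphi}_\omega^2$ — is at least $1$, and substitution into the reduced identity gives $\cK\norm{\partial\varphi}_\omega^2 \geq n+1$.

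I do not anticipate a serious obstacle: everything is local and algebraic once Proposition~\ref{P:PDE} is available. The only delicate moment is verifying that the scalar relation $\varphi_{\alpha;\beta}\varphi^\alpha = -\varphi_\beta$ genuinely encodes the eigenvalue statement $A\,\grad(\varphi) = \grad(\varphi)$, which rests on the symmetry of the complex Hessian afforded by the K\"ahler condition.
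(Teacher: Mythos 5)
Your proposal is correct and follows essentially the same route as the paper: annihilating the Laplacian term in Proposition~\ref{P:PDE}, deriving $\varphi_{\alpha;\beta}\varphi^\alpha=-\varphi_\beta$ from the constancy of $\norm{\partial\varphi}_\omega^2$ via the potential condition $\varphi_{\beta\bar\gamma}=g_{\beta\bar\gamma}$, and reading this as the eigenvalue statement $A\,\grad(\varphi)=\grad(\varphi)$ for the semi-positive operator \eqref{eqn:symmetric operator}, whose trace $\norm{\nabla'^2\varphi}_\omega^2$ is therefore at least $1$. Your explicit remark that $\grad(\varphi)$ is nowhere vanishing (since otherwise $\omega=d\dc\varphi=0$) is a small but worthwhile addition that the paper leaves implicit.
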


Now we will see that only the unit ball has a global \KE potential function whose gradient length attains the optimal (so minimal) constant $(n+1)/\cK$.


\subsubsection{A potential of the unit ball $\UB^n$} 
Let $\omega_{\UB^n}$ be the \KE metric of $\UB^n$ with Ricci curvature $-(n+1)$ and $\varphi_\rho$ be its canonical potential as in Section~\ref{subsec:boundary behavior of gradient length}. One can construct a potential function 
\begin{equation*}
\tilde\varphi = \varphi_\rho + 2\log\abs{1+z^1}
	=\frac{1}{n+1}\log\frac{\abs{1+z^1}^{2(n+1)}}{\paren{1-\norm{z}^2}^{n+1}}
\;,
\end{equation*}
applying the method of potential rescaling in \cite{LKH2021} to a sequence of hyperbolic automorphisms whose orbit accumulates at $(1,0,\ldots,0)\in\partial\UB^n$. By Proposition~2.2 in \cite{LKH2021}, this $\tilde\varphi$ has a constant gradient length
\begin{equation*}
\norm{\tilde\varphi}_{\omega_{\UB^n}}^2 \equiv 1  \;.
\end{equation*}
Therefore
\begin{equation*}
\varphi = \frac{n+1}{\cK}\tilde\varphi =\frac{1}{\cK}\log\frac{\abs{1+z^1}^{2(n+1)}}{\paren{1-\norm{z}^2}^{n+1}}
\end{equation*}
is a potential function of the complete \KE metric $\omega$ with Ricci curvature $-\cK$ satisfying
\begin{equation*}
\norm{\partial\varphi}_\omega^2 \equiv \frac{n+1}{\cK} \;.
\end{equation*}


\subsubsection{A potential for the bounded symmetric domain}

Irreducible bounded symmetric domains consist of the following 
four classical type domains,
\begin{align*}
\Omega_{p,q}^{\mathrm{I}} &= \left\{ Z\in M^{\mathbb C}(p,q) : I_p -  ZZ^* >0 \right\} \;,\\
\Omega^{\mathrm{II}}_m &= \left\{ Z\in  M^{\mathbb C}(m,m) : I_m - ZZ^*>0, \,\, Z^t = -Z  \right\}\;,\\
\Omega_{m}^{\mathrm{III}} &= \left\{ Z\in M^{\mathbb C}(m,m) : I_m - ZZ^*  >0,\,\, Z^t = Z  \right\}\;,\\
\Omega_m^{\mathrm{IV}} &= \left\{ Z=(z_1,\ldots, z_m)\in {\mathbb C}^m : 
ZZ^* <1 ,\, 0< 1-2 ZZ^*+ \left| ZZ^t \right|^2   \right\}\;,
\end{align*}
and two exceptional type domains,
\begin{equation*} 
\Omega_{16}^{\mathrm{V}}\;, \quad \Omega_{27}^{\mathrm{VI}}\;.
\end{equation*}
Here $M^{\mathbb C}(p,q)$ denotes the set of $p\times q$ complex matrices and $Z^*$ the complex conjugate transpose of the matrix $Z\in M^{\mathbb C}(p,q)$.

Let $\Omega$ be an irreducible bounded symmetric domain in $\CC^n$. The Bergman kernel $K_\Omega$ is of the form 
\begin{equation*}
K_\Omega(z,z) = c N_\Omega(z,z)^{-c_\Omega}
\end{equation*}
for the generic norm $N_\Omega$ of $\Omega$ and some positive constants $c$, $c_\Omega$. The constant $c_\Omega$, the dimension $n$ and the rank $r$ are given by as follows.

\setlength{\tabcolsep}{10pt}
\renewcommand{\arraystretch}{1.5}
\begin{table}[h]
\label{data for BSDs}
\caption{Invariants}
\begin{tabular}{ c|c|c|c|c|c|c}
$\Omega$& $\Omega_{p,q}^{\mathrm{I}}$&$\Omega_{m,m}^{\mathrm{II}}$& $\Omega^{\mathrm{III}}_{m,m}$&$\Omega_m^{\mathrm{IV}}$  &$\Omega_{16}^{\mathrm{V}}$& $\Omega_{27}^{\mathrm{VI}}$\\\hline
$c_\Omega$&$p+q$&$2(m-1)$&$m+1$&$m$&$12$&$18$\\\hline
$n$&$pq$&$\frac{m(m-1)}{2}$&$\frac{m(m+1)}{2}$&$m$&$16$&$27$\\\hline
$r$&$p$&$\left[ \frac{m}{2}\right]$&$m$&$2$&$2$&$3$
\end{tabular}
\end{table}

By abuse of notation, we will denote the Bergman kernel function by $K_\Omega$, that is, $K_\Omega(z)=K_\Omega(z,z)$.
Let us consider the Bergman metric $\omega_\Omega$ of $\Omega$, 
\begin{equation*}
\omega_\Omega=d\dc \log K_\Omega \;,
\end{equation*}
which is also a complete \KE metric with Ricci curvature $-1$ (i.e. $\cK=1$). We remark that the potential function $\log K_\Omega$ does not have a constant gradient length. 

Let us consider a Cayley transform $\sigma:\Omega\to S$ where $S$ is a Siegel domain of the second kind. Then the Bergman kernel function $K_S$ of $S$ gives a potential function $\sigma^*\log K_S$ of $(\Omega,\omega_\Omega)$ with constant gradient length.
\begin{theorem}[Kai-Ohsawa~\cite{Kai-Ohsawa2007}]\label{thm:Kai-Ohsawa}
The potential function $\sigma^*\log K_S$ of $\omega_\Omega$ has a constant gradient length $L_\Omega$:
\begin{equation*}
\norm{\partial(\sigma^*\log K_S)}_{\omega_\Omega}^2 \equiv L_\Omega \;.
\end{equation*}
If there is another potential function $\varphi$ with constant $\norm{\partial\varphi}_{\omega_\Omega}^2$, then $\norm{\partial\varphi}_{\omega_\Omega}^2\equiv L_\Omega$.
\end{theorem}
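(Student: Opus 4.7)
The plan rests on the fundamental property of the Cayley transform $\sigma:\Omega\to S$: the target $S$ is an \emph{affine-homogeneous} Siegel domain of the second kind, so the complex-affine automorphism group $\mathrm{Aff}(S)$ acts transitively on $S$. This is the feature that makes $\log K_S$ behave well under $\mathrm{Aff}(S)$, even though $\log K_\Omega$ is not well-behaved under $\Aut(\Omega)$ in the same way.

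\textbf{Existence of $L_\Omega$.} Any $T\in\mathrm{Aff}(S)$, $T(w)=Aw+b$, has constant holomorphic Jacobian $j_T=\det A$, so the Bergman-kernel transformation law yields
\[
\log K_S\circ T \;=\; \log K_S - 2\log|j_T| \;=\; \log K_S + \text{(const)}.
\]
Hence $\partial\log K_S$ is $T^*$-invariant, and since $T^*\omega_S=\omega_S$, the scalar $\|\partial\log K_S\|_{\omega_S}^2$ is $\mathrm{Aff}(S)$-invariant. Transitivity forces it to be a constant $L_\Omega$ throughout $S$. Biholomorphic invariance of the Bergman metric and of the gradient-length operation then gives $\|\partial(\sigma^*\log K_S)\|_{\omega_\Omega}^2\equiv L_\Omega$ on $\Omega$.

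\textbf{Uniqueness of the constant.} Let $\varphi$ be any potential of $\omega_\Omega$ with constant $\|\partial\varphi\|_{\omega_\Omega}^2\equiv L$. Transferring to $S$ via $\psi:=(\sigma^{-1})^*\varphi$, the difference $h:=\psi-\log K_S$ is pluriharmonic and $\partial h$ is a holomorphic $(1,0)$-form. Expanding the squared norm gives, everywhere on $S$, the identity
\[
L - L_\Omega \;=\; 2\,\RE\langle\partial\log K_S,\partial h\rangle_{\omega_S}+\|\partial h\|_{\omega_S}^2.
\]
To extract $L=L_\Omega$ I would evaluate this identity along an affine one-parameter subgroup $\{T_t\}\subset\mathrm{Aff}(S)$ whose orbit $T_t(w_0)$ drives a base point toward the Shilov boundary of $S$: by the first step $\|\partial\log K_S\|_{\omega_S}^2$ stays pinned at $L_\Omega$, while the pullback of $\partial h$ to the bounded realization via $\sigma$ is a holomorphic form of controlled growth, forcing both the mixed term and $\|\partial h\|_{\omega_S}^2$ to vanish asymptotically.

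The main obstacle is this uniqueness step: the displayed identity alone does not immediately rule out $L\neq L_\Omega$, and one must argue that the pluriharmonic perturbation $h$ has asymptotically vanishing $\omega_S$-gradient along affine orbits. This is the technical core of the Kai--Ohsawa analysis, combining $\mathrm{Aff}(S)$-equivariance with explicit Bergman-kernel formulas near the Shilov boundary~\cite{Kai-Ohsawa2007}.
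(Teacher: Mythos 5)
First, a point of reference: the paper does not prove this statement at all --- it is imported verbatim from Kai--Ohsawa --- so your proposal has to be judged against what a complete proof would require rather than against an in-text argument. Your first half is correct and is exactly the mechanism the paper gestures at in prose ("a homogeneous Siegel domain of the second kind is affine-homogeneous, so its Bergman kernel function generates a potential function with constant gradient length"): affine automorphisms of $S$ have constant Jacobian, the Bergman transformation law then shifts $\log K_S$ by an additive constant, and transitivity of the affine group together with invariance of $\omega_S$ pins $\norm{\partial\log K_S}_{\omega_S}^2$ to a constant, which pulls back to $\Omega$ by biholomorphic invariance. That part stands.

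The uniqueness half has a genuine gap. The reduction is right: $h$ is pluriharmonic, $\partial h$ is holomorphic, and $L-L_\Omega=2\RE\ip{\partial\log K_S,\partial h}_{\omega_S}+\norm{\partial h}_{\omega_S}^2$ everywhere. But the asymptotic mechanism you propose --- that the mixed term \emph{and} $\norm{\partial h}_{\omega_S}^2$ both vanish along affine orbits approaching the Shilov boundary --- is not established, and as stated it is false even in the simplest case. Take $\UB^1$ with $\varphi_\rho=-\log(1-\abs{z}^2)$ and the admissible perturbation $h=2\log\abs{1+z}$ (this is precisely the paper's $\tilde\varphi-\varphi_\rho$): one computes $\norm{\partial h}_\omega^2=(1-\abs{z}^2)^2\abs{1+z}^{-2}$, which tends to $4$, not $0$, as $z\to-1$ radially, while the mixed term tends to $-4$; only the sum cancels. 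So "controlled growth of a holomorphic form" cannot force termwise decay along every boundary approach, and the argument must instead produce, for an \emph{arbitrary} pluriharmonic $h$ keeping the total constant, some specific orbit or point at which $\partial h$ genuinely degenerates --- or proceed by an entirely different route (e.g.\ the Lie-theoretic normalization of pluriharmonic functions on a homogeneous Siegel domain used by Kai--Ohsawa). That existence statement is the actual content of the uniqueness claim, and you explicitly leave it open as "the main obstacle." As it stands, the proposal proves the easy half and correctly frames, but does not prove, the hard half.
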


For a maximal totally geodesic polydisc $\Delta^r$ in $\Omega$ where $r$ is the rank of $\Omega$, we may assume that 
\begin{equation*}
\Delta^r= \set{(z^1,\ldots, z^r, 0\ldots,0) : \abs{z^\alpha}<1\text{ for $\alpha=1,\ldots,r$}}
\end{equation*}
by a change of coordinates of $\CC^n$. Then we can take a Cayley transform $\sigma\colon \Omega\to S$
such that
\begin{enumerate}
\item $S$ is a Siegel domain of the second kind in $\CC^n$,
\item the restriction $\sigma|_{\Delta^r} \colon\Delta^r\to \mathbb H^r\subset S$ is a Cayley transformation of the polydisc to the $r$-product of the right half plane $\HP=\set{\zeta\in\CC:\RE \zeta < 0}$,
\begin{equation*}
\mathbb H^r=\{(w^1,\ldots, w^r,0,\ldots,0): \RE w^\alpha <0\text{ for $\alpha=1,\ldots,r$}\} \;,
\end{equation*}
more precisely
\begin{equation}\label{eqn:Cayley transformation on polydisc}
\sigma(z^1,\ldots,z^r,0,\ldots,0) = \paren{\frac{z^1-1}{z^1+1},\ldots,\frac{z^r-1}{z^r+1},0,\ldots,0} \;.
\end{equation}
\end{enumerate}
Note that at $w=(w^1,\ldots,w^r,0,\ldots,0)\in\HP^r$, the Bergman kernel $K_S$ of $S$ is written by
\begin{equation}\label{eqn:Bergam kernel on HP}
K_S(w,w) = C \paren{K_{\HP^r }(w,w)}^{c_\Omega/2} = C\paren{K_{\HP}(w^1,w^1)\cdots K_{\HP}(w^r,w^r)}^{c_\Omega/2}
\end{equation}
for some positive constant $C$ where $K_\HP$ is the Bergman kernel of $\HP$.

By a straightforward calculation, the Bergman metric  is $c_\Omega \delta_{\alpha\bar\beta}$ at $0\in\Omega$, so we get
\begin{multline*}
L_\Omega=\norm{\partial(\sigma^*\log K_S)}_{\omega_\Omega}^2(0)
	=
\norm{\partial \log (K_{S}\circ\sigma)}_{\omega_\Omega}^2(0)
\\
	= \norm{
		\sum_{\alpha=1}^n 
		\frac{\partial}{\partial z^\alpha} \log (K_{S}\circ\sigma)dz^\alpha
		}_{\omega_\Omega}^2(0)
	=\frac{1}{{c_\Omega}}\sum_{\alpha=1}^n \left| \left.\frac{\partial}{\partial z^\alpha}\right|_{z=0} \log(K_{S}\circ\sigma) \right|^2
	\\
	\geq \frac{1}{{c_\Omega}}\sum_{\alpha=1}^r \left| \left.\frac{\partial}{\partial z^\alpha}\right|_{z=0} \log (K_{S}\circ\sigma) \right|^2 \;.
\end{multline*}
Since $\sigma(0)=(-1,\ldots,-1,0,\ldots,0)\in\HP^r$ and $K_{\HP}(\zeta) = 2/\paren{\zeta+\bar\zeta}^2$, Equation~\eqref{eqn:Cayley transformation on polydisc} and the identity in \eqref{eqn:Bergam kernel on HP} implies that
\begin{equation*}
\left.\frac{\partial}{\partial z^\alpha}\right|_{z=0} \log (K_{S}\circ\sigma)
=c_\Omega \paren{\left.\pd{}{\zeta}\right|_{\zeta=-1}\log K_{\HP} }
=c_\Omega \;.
\end{equation*}
Therefore we have
\begin{equation*}
L_\Omega \geq rc_\Omega \;.
\end{equation*}
This gives a characterization of the unit ball among bounded symmetric domains.
\begin{lemma}\label{lem:gradient length of BSDs}
Let $\Omega$ be a bounded symmetric domain in $\CC^n$ with the complete \KE metric $\omega$ with Ricci curvature $-\cK$ and  let $\varphi$ be a potential function of $\omega$  with constant $\norm{\partial\varphi}_{\omega}^2$. Then if $\Omega$ is not the unit ball up to biholomorphic equivalence, then
\begin{equation*}
\norm{\partial\varphi}_{\omega}^2> \frac{n+1}{\cK} \;.
\end{equation*} 
\end{lemma}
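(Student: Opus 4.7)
The key computation already performed in the paragraphs preceding the lemma establishes the lower bound $L_\Omega \geq r\, c_\Omega$ for the Kai-Ohsawa constant of an irreducible bounded symmetric domain $\Omega$ of rank $r$, measured in the Bergman normalization. My plan is to combine this bound with the classification table and a rescaling argument to obtain the sharp inequality asserted in the lemma.

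First I would pass from the Bergman normalization (Ricci curvature $-1$) to the normalization $\Ric(\omega) = -\cK\omega$. By the uniqueness of the complete \KE metric we have $\cK\omega = \omega_\Omega$, and since Theorem~\ref{thm:Kai-Ohsawa} forces every constant-gradient-length potential of the Bergman metric to have gradient length exactly $L_\Omega$, the rescaling formula \eqref{eqn:change of Ricci} yields $\norm{\partial\varphi}_\omega^2 \equiv L_\Omega/\cK$. Thus it suffices to prove $L_\Omega \geq n+1$ with equality precisely when $\Omega$ is biholomorphic to $\UB^n$.

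For the irreducible case, I would verify $r\, c_\Omega \geq n+1$ row-by-row in the classification table. For $\Omega^{\mathrm{I}}_{p,q}$ (with $p \leq q$) the inequality $p(p+q) \geq pq+1$ reduces to $p^2 \geq 1$, with equality precisely at $p=1$, i.e.\ $\Omega = \UB^q$. For each of the other types a short arithmetic check shows $r\, c_\Omega > n+1$ whenever the domain is not already a ball in disguise; the borderline cases coming from low-rank coincidences (for instance $\Omega^{\mathrm{II}}_3$, which is $3$-dimensional of rank $1$ and so biholomorphic to $\UB^3$, or $\Omega^{\mathrm{III}}_1 \cong \UB^1$) must be identified as unit balls, after which strict inequality holds for all remaining entries.

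Finally, if $\Omega = \Omega_1 \times \cdots \times \Omega_k$ is reducible ($k \geq 2$), the Bergman metric and the canonical potential split as direct sums over the factors, so the sum of constant-gradient-length potentials of the factors gives a potential of $\omega_\Omega$ with constant gradient length $\sum_j L_{\Omega_j}$; uniqueness in Theorem~\ref{thm:Kai-Ohsawa} then forces $L_\Omega = \sum_j L_{\Omega_j}$. Applying the irreducible case to each factor, $L_\Omega \geq \sum_j (n_j + 1) = n + k > n+1$, with strict inequality. The step I expect to require the most care is the correct identification of the equality cases in the classification: the bound $r\, c_\Omega \geq n+1$ is an equality precisely for the ball, but one must reconcile this with the low-rank isomorphisms among the classical types before asserting strict inequality in all non-ball cases.
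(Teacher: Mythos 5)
Your proposal is correct and follows essentially the same route as the paper: rescale to the Bergman normalization, use the bound $L_\Omega \ge r\,c_\Omega$ together with the classification table for irreducible domains, and additivity of the constant gradient length over factors for reducible ones. The only (harmless) differences are that you treat the low-rank coincidences such as $\Omega^{\mathrm{II}}_3\cong\UB^3$ explicitly where the paper simply asserts $r c_\Omega>n+1$ for irreducible non-ball domains, and in the product case you apply the irreducible bound to each factor where the paper invokes Proposition~\ref{prop:lower bound of length} directly.
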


\begin{proof}
Since the Ricci curvature of the Bergman metric $\omega_\Omega$ is $-1$, so $\omega=(1/\cK)\omega_\Omega$ is the complete \KE metric with Ricci curvature $-\cK$ and $\varphi=(1/\cK)\sigma^*\log K_S$ is a potential function of $\omega$ satisfying
\begin{equation*}
\norm{\partial\varphi}_{\omega}^2 = \frac{1}{\cK}\norm{\sigma^*\log K_S}_{\omega_\Omega}^2 \equiv\frac{L_\Omega}{\cK} \geq \frac{rc_\Omega}{\cK} \;.
\end{equation*}
Table~\ref{data for BSDs} shows  that $rc_\Omega>n+1$  if $\Omega$ is irreducible and $\Omega\neq\Omega^{\mathrm{I}}_{1,n}=\UB^n$. Thus the assertion follows for an irreducible $\Omega$.

Suppose that $\Omega$ is a product of bounded symmetric domains $\Omega_1$ and $\Omega_2$, that is, $\Omega=\Omega_1\times\Omega_2$. Let $\omega_k$ ($k=1,2$) be the complete \KE metric of $\Omega_k$ with Ricci curvature $-\cK$. By the uniqueness of the \KE metric, we have 
\begin{equation*}
\omega=\pi_1^*\omega_1 + \pi_2^*\omega_2
\end{equation*}
as a K\"ahler form where $\pi_k:\Omega_1\times\Omega_2\to\Omega_k$ is the projection. Taking a potential function $\varphi_k$ of $\omega_k$ with constant gradient length, we have a potential function $\varphi=\pi_1^*\varphi_1 + \pi_2^*\varphi_2$ of $\omega$ satisfying
\begin{equation*}
\norm{\partial\varphi}_\omega^2 \equiv \norm{\partial\varphi_1}_{\omega_1}^2+\norm{\partial\varphi_2}_{\omega_2}^2 \;.
\end{equation*}
Proposition~\ref{prop:lower bound of length} implies that $\norm{\partial\varphi}_\omega^2$ is greater than $(n+1)/\cK$. This completes the proof.
\end{proof}



\subsection{Existence of a complete holomorphic vector field}\label{subsec:existence of a complete holomorphic vector field}
In \cite{LKH2021,Choi-Lee2021}, it was proved that there is a complete holomorphic vector field on a negatively curved complete \KE manifold admitting a global potential function with minimal constant gradient length.

\begin{theorem}[Theorem 3.2 in \cite{Choi-Lee2021}]\label{thm:existence}
Let $\omega$ be a complete \KE metric of $X^n$ with Ricci curvature $-K$. If  there is a  global potential $\varphi$ of $\omega$ satisfying
\begin{equation*}
\norm{\partial\varphi}_\omega^2 \equiv \frac{n+1}{\cK} \;,
\end{equation*}
then the $(1,0)$-vector field 
\begin{equation*}
V=\im e^{\frac{\cK\varphi}{n+1}}\grad(\varphi)
\end{equation*}
is a complete holomorphic vector field.
\end{theorem}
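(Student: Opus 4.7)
The plan is to verify holomorphicity of $V$ via a pointwise tensor identity that sharpens the inequality $\norm{\nabla'^2\varphi}_\omega^2\geq 1$ of Section~\ref{subsec:minimal length of gradient}, and then to deduce completeness by showing that the real vector field $V+\bar V$ is Killing for the Riemannian metric underlying $\omega$.

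For the first step, Proposition~\ref{P:PDE} applied to $\norm{\partial\varphi}_\omega^2\equiv(n+1)/\cK$ yields $\norm{\nabla'^2\varphi}_\omega^2\equiv 1$, so the semi-positive Hermitian operator $T^\alpha_\gamma=\varphi^\alpha_{;\bar\beta}\varphi^{\bar\beta}_{;\gamma}$ of $T^{(1,0)}X$ has trace $1$ everywhere and simultaneously admits the nowhere-vanishing vector $\grad(\varphi)$ as an eigenvector with eigenvalue $1$ (this is precisely the calculation preceding Proposition~\ref{prop:lower bound of length}). Semi-positivity then forces $T$ to have rank exactly $1$ pointwise, and hence the matrix $B^\alpha_{\bar\beta}:=\varphi^\alpha_{;\bar\beta}$ itself has rank $1$ and factors as $B^\alpha_{\bar\beta}=u^\alpha v_{\bar\beta}$. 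Contracting this factorization against $\varphi^{\bar\beta}$ using \eqref{E:Key_equation} forces $u^\alpha$ to be a scalar multiple of $\varphi^\alpha$; contracting against $\varphi_\alpha$ using the companion identity $\varphi_\alpha\varphi^\alpha_{;\bar\beta}=-\varphi_{\bar\beta}$ (derived in the same way from $\partial_{\bar\beta}\norm{\partial\varphi}_\omega^2=0$) forces $v_{\bar\beta}$ to be a scalar multiple of $\varphi_{\bar\beta}$, and substituting $\norm{\partial\varphi}_\omega^2=(n+1)/\cK$ pins the proportionality constant. The result is the sought identity
\begin{equation*}
\varphi^\alpha_{;\bar\beta}=-\frac{\cK}{n+1}\,\varphi^\alpha\varphi_{\bar\beta}.
\end{equation*}
Setting $c:=\cK/(n+1)$ and using $\Gamma^\alpha_{\gamma\bar\beta}=0$ in K\"ahler coordinates, holomorphicity of $V$ becomes the single computation
\begin{equation*}
\partial_{\bar\beta}V^\alpha=\partial_{\bar\beta}(\im\,e^{c\varphi}\varphi^\alpha)=\im\,e^{c\varphi}\bigl(c\,\varphi_{\bar\beta}\varphi^\alpha+\varphi^\alpha_{;\bar\beta}\bigr)=0.
\end{equation*}

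For completeness I would next compute $L_V\omega$. From $V\inp\omega=-e^{c\varphi}\bar\partial\varphi$ and $d\omega=0$, together with $\partial\bar\partial\varphi=-\im\,\omega$, $\bar\partial^2\varphi=0$ and $\bar\partial\varphi\wedge\bar\partial\varphi=0$, one finds
\begin{equation*}
L_V\omega=d(V\inp\omega)=-c\,e^{c\varphi}\partial\varphi\wedge\bar\partial\varphi+\im\,e^{c\varphi}\omega.
\end{equation*}
Both $\partial\varphi\wedge\bar\partial\varphi$ and $\im\,\omega$ are purely imaginary $(1,1)$-forms (each equals $-1$ times its complex conjugate), so $\overline{L_V\omega}=-L_V\omega$ and hence $L_{V+\bar V}\omega=L_V\omega+\overline{L_V\omega}=0$. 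Since $V$ is holomorphic, the real flow of $V+\bar V$ preserves the complex structure $J$ as well as $\omega$, hence preserves the K\"ahler metric $g(\cdot,\cdot)=\omega(\cdot,J\cdot)$; so $V+\bar V$ is a Killing field. The classical fact that Killing fields on complete Riemannian manifolds are complete guarantees that its flow exists for all $t\in\RR$, which is exactly the assertion that $V$ is a complete holomorphic vector field.

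The main obstacle is the rank-one reduction in the first step. Proposition~\ref{P:PDE} and the hypothesis deliver only the scalar equation $\norm{\nabla'^2\varphi}_\omega^2\equiv 1$, whereas the holomorphicity computation $\partial_{\bar\beta}V^\alpha=0$ requires the full $n\times n$ tensor identity for $\varphi^\alpha_{;\bar\beta}$. Upgrading the scalar equality into the pointwise tensor equation relies on combining semi-positivity of $T$ with its distinguished eigenvector $\grad(\varphi)$ to conclude rank one, and then using both scalar contractions from constancy of $\norm{\partial\varphi}_\omega^2$ to fix the rank-one factorization exactly — including its sign and the normalization $-\cK/(n+1)$. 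Once this algebraic step is in hand, both the holomorphicity of $V$ and the Killing property of $V+\bar V$ reduce to essentially one-line computations.
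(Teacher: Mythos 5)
Your proposal is correct, and it diverges from the paper's proof in an interesting way on the completeness step while being essentially equivalent on holomorphicity. For holomorphicity, the paper feeds the same two inputs you use --- $\norm{\nabla'^2\varphi}_\omega^2\equiv 1$ from Proposition~\ref{P:PDE} and the contraction identity \eqref{E:Key_equation} --- directly into the expansion of $\norm{\nabla''V}_\omega^2$, which collapses to $e^{2\cK\varphi/(n+1)}\bigl(\tfrac{\cK}{n+1}\norm{\partial\varphi}_\omega^2-1\bigr)^2\equiv 0$; your route instead first upgrades the scalar information to the pointwise tensor identity $\varphi^\alpha_{;\bar\beta}=-\tfrac{\cK}{n+1}\varphi^\alpha\varphi_{\bar\beta}$ via the rank-one argument and then reads off $\partial_{\bar\beta}V^\alpha=0$. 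The two are logically equivalent (the paper's vanishing of $\norm{\nabla''V}_\omega^2$ implies your tensor identity and vice versa), but yours makes the geometric content of the hypothesis more visible. One small caution in the rank step: from $\mathrm{rank}\,T=1$ you conclude $\mathrm{rank}\,B=1$, which for a general product $T=B\bar B$ would only give a lower bound on $\mathrm{rank}\,B$; here it is justified because the $(2,0)$-Hessian $\varphi_{\alpha;\beta}$ is symmetric, so after lowering indices $T$ is of the form $H^*H$ and $\mathrm{rank}\,T=\mathrm{rank}\,H=\mathrm{rank}\,B$ --- worth a sentence if you write this up. For completeness the approaches genuinely differ: the paper observes that $\RE(\im\grad\varphi)$ annihilates $\varphi$, so integral curves of $\RE V$ stay in level sets of $\varphi$ where $V$ is a \emph{constant} multiple of the bounded-length (hence complete) field $\im\grad\varphi$, and concludes by reparametrization; you instead verify $L_{V+\overline V}\omega=0$ directly and invoke the classical completeness of Killing fields on complete Riemannian manifolds. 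Your computation is correct, and it buys something the paper only obtains a posteriori (from uniqueness of the K\"ahler-Einstein metric), namely that the flow of $\RE V$ acts by isometries --- a fact the paper uses in the proof of Theorem~\ref{thm:main1}; the paper's argument is more elementary in that it avoids the Killing-completeness theorem.
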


The completeness of $V$ means that the real tangent vector field given by $\RE V=V+\overline{V}$ is complete. Therefore the holomorphicity of $V$ implies that $\RE V$ generates infinitesimally an $1$-parameter family of automorphisms of $X$ which is nontrivial because $V$ is nowhere vanishing, namely, $\norm{V}_\omega = e^{\frac{\cK\varphi}{n+1}}\norm{\grad(\varphi)}_\omega = e^{\frac{\cK\varphi}{n+1}}\norm{\partial\varphi}_\omega$ is nowhere zero. In \cite{LKH2021,Choi-Lee2021}, we assumed that the Ricci curvature is $-(n+1)$ for $n=\dim X$ and $\varphi$ satisfies $d\dc\varphi=(n+1)\omega$ for a technical simplicity.  In what follows in this section, we will show Theorem~\ref{thm:existence} briefly.

\medskip

\noindent\textbf{The completeness of $V$}: Since $\omega$ is a complete metric, the $(1,0)$-vector field $W=\sqrt{-1}\grad(\varphi)$ with constant length $(n+1)/\cK$ is complete. Moreover the corresponding real vector field $\RE W$ is tangent to each level set of $\varphi$  since
\begin{equation*}
(\RE W)\varphi = \im\varphi^\alpha\varphi_\alpha - \im\varphi^{\bar\alpha}\varphi_{\bar\alpha} = 0 \;.
\end{equation*}
This means that an integral curve $\gamma:\RR\to X$ of $\RE W$ lies on a level subset $\set{\varphi=c}$. Since $V=e^{\frac{\cK\varphi}{n+1}}W = c'W$ on $\set{\varphi=c}$ for $c'=\cK c/(n+1)$, the curve $\tilde\gamma:\RR\to X$ given by $\tilde\gamma(t)=\gamma(c't)$ is an integral curve of $V$. This implies that $V$ is complete.

\medskip

\noindent\textbf{The holomorphicity of $V$}:
Let us written the vector field $V$ by
\begin{equation*}
V
=
\im V^\alpha\pd{}{z^\alpha}
\quad\text{where}\quad
V^\alpha
=
e^{\frac{\cK\varphi}{n+1}}\varphi^\alpha \;.
\end{equation*}
In order to prove that $V$ is holomorphic, we will see that $\nabla'' V$ is vanishing where $\nabla''$ is the $(0,1)$-part of the K\"ahler connection $\nabla$, so coincides with $\bar\partial$ on $T^{(1,0)}X$. The tensor field $\nabla'' V$ is given by
\begin{equation*}
\nabla'' V
=
\im\ind{V}{}{\alpha}{;\bar\beta}\pd{}{z^\alpha}\otimes dz^{\bar\beta}
\end{equation*}
where 
\begin{equation*}
	\ind{V}{}{\alpha}{;\bar\beta}
	=
	\frac{\cK}{n+1}e^{\frac{\cK\varphi}{n+1}}
	\varphi_{\bar\beta}\varphi^\alpha
	+
	e^{\frac{\cK\varphi}{n+1}}\ind{\varphi}{}{\alpha}{;\bar\beta} 
	=
	e^{\frac{\cK\varphi}{n+1}}\paren{
		\frac{\cK}{n+1}\varphi_{\bar\beta}\varphi^\alpha
		+\ind{\varphi}{}{\alpha}{;\bar\beta}
		}
	\;.
\end{equation*}
A straightforward computation gives that
\begin{multline*}
	\norm{\nabla'' V}_\omega^2
	\\
	=
	e^{\frac{2\cK\varphi}{n+1}}\paren{
	\paren{\frac{\cK}{n+1}}^2(\varphi_\alpha \varphi^\alpha)^2
	+
	\frac{\cK}{n+1} \varphi_{\bar\beta}
	\varphi^\alpha\ind{\varphi}{\alpha}{;\bar\beta}{}
	+
	\frac{\cK}{n+1}
	\ind{\varphi}{}{\alpha}{;\bar\beta}\varphi^{\bar\beta} \varphi_\alpha
	+
	\varphi^{\alpha;\beta}\varphi_{\alpha;\beta}
	}
		\\
	=
	e^{\frac{2\cK\varphi}{n+1}}\paren{
	\paren{\frac{\cK}{n+1}}^2\norm{\partial\varphi}^4_\omega
	-2\frac{\cK}{n+1} \norm{\partial\varphi}^2_\omega
	+
	1}
	\\
	=e^{\frac{2\cK\varphi}{n+1}}\paren{\frac{\cK}{n+1} \norm{\partial\varphi}^2_\omega-1}^2
	\equiv 0
	\;.
\end{multline*}
Here, we used the identity in \eqref{E:Key_equation}. This implies that $V$ is holomorphic.


\section{Proofs of main results}\label{sec:proof of main results}
In this section, we will prove Theorem~\ref{thm:main1} and Theorem~\ref{thm:main2}. Throughout this section, $X^n$ is a complex manifold with a complete \KE metric $\omega$ of Ricci curvature $-\cK$ and $\Gamma$ is a discrete, torsion-free, cocompact  subgroup of $\Aut(X)$.

\subsection{Proof of Theorem~\ref{thm:main1}}\label{subsec:proof of thm1}
Suppose that $X$ is simply connected and there is a global potential function $\varphi$ of $\omega$ with
\begin{equation}\label{eqn:minimal gradient}
\norm{\partial\varphi}_\omega^2\equiv \frac{n+1}{\cK} \;.
\end{equation}
By Lemma~\ref{lem:gradient length of BSDs}, it is sufficient to show that $X$ is biholomorphic to a bounded symmetric domain, i.e.  a hermitian symmetric space of noncompact type.

\medskip

Since $(X,\omega)$ is Ricci negative and $\Gamma$ acts on $X$ as an isometric transformation group, the quotient $\Gamma\backslash X$ has a negative anti-canonical class so $c_1(\Gamma\backslash X)<0$. By the Nadel-Frankel theorem (Theorem 0.1 in \cite{Frankel1995}), there is a finite covering $X'\to\Gamma\backslash X$ such that $X'$ is holomorphically factorized by
\begin{equation*}
X' = X'_1\times X'_2
\end{equation*}
where $X'_1$ is locally symmetric and $X'_2$ is locally rigid (the universal covering of $X_2'$ has a discrete automorphism group). Then we have the factorization
\begin{equation*}
X=X_1\times X_2
\end{equation*}
where $X_k$ is the universal covering of $X'_k$; therefore
\begin{enumerate}
\item $X_1$ is a hermitian symmetric space of noncompact type;
\item $\Aut(X_2)$ is discrete.
\end{enumerate}
We will show that $X_2$ is a trivial factor so that $X=X_1$.

\medskip

Suppose that $X_2$ is not trivial. Since $X_1$ is a hermitian symmetric space of noncompact type, so it admits a complete \KE metric $\omega_1$ with $\Ric(\omega_1)=-\cK\omega_1$. Moreover since $c_1(X'_2)<0$ by Corollary~4.5 in \cite{Frankel1995}, the covering $X_2$ also admits a complete \KE metric $\omega_2$ with $\Ric(\omega_2)=-\cK\omega_2$ by Yau~\cite{Yau1978}. Therefore $\omega$ should be the product metric of $\omega_1$ and $\omega_2$ because of the uniqueness of negatively curved complete \KE metric. 

\medskip

By the assumption as in \eqref{eqn:minimal gradient} and Theorem~\ref{thm:existence}, the $(1,0)$-vector field
\begin{equation*}
V=ie^{\frac{\cK\varphi}{n+1}}\grad(\varphi)
\end{equation*}
is a complete holomorphic vector field of  $X$. Let $\vV=V+\overline{V}$ be the corresponding real tangent vector field  and let $\set{\vV_t:t\in\RR}$ be its flow so that each $\vV_t$ belongs to the identity component of $\Aut(X)$ and $\Isom(X,\omega)$. By  the de Rham decomposition on a product space of simply connected Riemannian manifolds (Theorem~3.5 in Chapter VI of  \cite{KN-DG1}), $\vV_t$ can be split to isometries of $X_1$ and $X_2$; thus there is  $\vV_{k,t}\in\Isom(X_k,\omega_k)$ such that
\begin{equation*}
\vV_t(x_1,x_2)=(\vV_{1,t}(x_1),\vV_{2,t}(x_2))
\end{equation*}
for any $(x_1,x_2)\in X_1\times X_2$. That means that we can regard $\vV_{2,t}$ as an isometry of $(X_2,\omega)$. Since $\vV_t:X\to X$ is holomorphic, the restriction $\vV_{2,t}:X_2\to X_2$ is also holomorphic, so constitutes a holomorphic transformation group of $X_2$. Therefore $\vV_{2,t}$ is just the identity mapping of $X_2$ because $\Aut(X_2)$ is discrete. Now we can conclude that the infinitesimal generator $\vV$  should be tangent to each fiber $X_1$ of $X$ so orthogonal to each fiber $X_2$. This also holds for $\grad(\varphi)$. The identity $\ip{\grad(\varphi),\,\cdot\,}_\omega = \bar\partial\varphi(\,\cdot\,)$ says that $d\varphi(v)=\dc\varphi(v)=0$ for any vector  $v$ in the complexified tangent bundle $\CC TX_2$ of $X_2$.

Let $W$ be  a $(1,0)$-vector field tangent to $X_2$. The Lie bracket $[W,\overline{W}]$ is also tangent to $X_2$, so we have $\dc\varphi(W)=\dc\varphi(\overline{W})=\dc\varphi([W,\overline{W}])=0$. This means that the K\"ahler form $\omega$ annihilates the nontrivial subbundle $\CC TX_2$ since
\begin{multline*}
\omega(W,\overline{W}) = d\dc\varphi(W,\overline{W}) 
	= W\paren{\dc\varphi\overline{W}}-\overline{W}\paren{\dc\varphi(W)}-\dc\varphi([W,\overline{W}]) = 0 \;.
\end{multline*}
This is a contradiction, so $X_2$ is trivial.  \qed


\subsection{Proof of Theorem~\ref{thm:main2}}\label{subsec:proof of thm2}
Let $U$ be a localizing neighborhood $U$  of a sequence $\set{f_j}\subset\Gamma$, that is,  if $K\subset X$ is compact, then
\begin{equation*}
f_j(K)\subset U
\end{equation*}
for sufficiently large $j$. Suppose that there is a local potential function $\varphi:U\to\RR$ of $\omega$ satisfying
\begin{equation}\label{cond:uniform bound}
\norm{\partial\varphi}_\omega <C \quad\text{on $U$}
\end{equation}
for some constant $C$, and 
\begin{equation}\label{cond:boundary behavior in proof}
\lim_{j\to\infty} \norm{\partial\varphi}_\omega^2 (f_j(x)) = \frac{n+1}{\cK}
\end{equation}
for any $x\in X$.

\medskip

Let us fix a point $x_0\in X$ and consider a $\omega$-distance ball $B_R$ centered at $x_0$ with radius $R>0$. Then $B_R$ is relatively compact in $X$ so $f_j(B_R)\subset U$ eventually for $j$. Therefore we can consider a sequence $\set{\varphi_j}$ of functions on $B_R$ defined by
\begin{equation*}
\varphi_j = \varphi\circ f_j-(\varphi\circ f_j)(x_0) \;.
\end{equation*}
This is indeed a sequence of potentials of $\omega$ on $B_R$ since $f_j\in\Isom(X,\omega)$ implies
\begin{equation*}
d\dc\varphi_j = d\dc(f_j^*\varphi) = f_j^* d\dc\varphi = f_j^*\omega = \omega \;.
\end{equation*}
We will show that 
\begin{quote}
\textit{$\set{\varphi_j}$ admits a subsequence converging on $B_R$ in the local $C^\infty$-topology.}
\end{quote}
If it holds, using the compact exhaustion $X=\bigcup_{R_j\to\infty} B_{R_j}$ and the diagonal processing, we have a global potential function $\varphi_\infty$ of $\omega$ as a subsequential limit of $\set{\varphi_j}$ in the local $C^\infty$-topology of $X$. When we assume $\varphi_j\to\varphi_\infty$ passing to a subsequence, it follows
\begin{equation*}
\norm{\partial\varphi_\infty}_\omega (x) = \lim_{j\to\infty}\norm{\partial\varphi_j}_\omega (x)
= \lim_{j\to\infty}\norm{\partial\varphi}_\omega (f_j(x)) = \frac{n+1}{\cK}
\end{equation*}
for any $x\in X$ from $f_j\in\Isom(X,\omega)$ and the assumption of \eqref{cond:boundary behavior in proof}. Lifting $\varphi_\infty$ to the universal covering $\widetilde X$ of $X$ and applying Theorem~\ref{thm:main1}, we can see that $\widetilde X$ is biholomorphic to the unit ball.  In case of simply connected $U$, $X$ is also simply connected from the completeness of $\omega$ (see Lemma in pg. 256 in \cite{Wong1977}). Therefore it remains to show the assertion.

\medskip

Let us take $R'>R$ and consider $\varphi_j$ as a potential function  on $B_{R'}$ for sufficiently  large $j$ so that $f_j(B_{R'})\subset U$. Then we have
\begin{equation*}
\frac{1}{2}\norm{d\varphi_j}_\omega=\norm{\partial\varphi_j}_\omega = \norm{\partial (f^*\varphi)}_\omega = \norm{\partial\varphi}_\omega\circ f <C
\end{equation*}
uniformly on $B_{R'}$. Since $\varphi_j(x_0)=0$ for any $j$, we can conclude that $\set{\varphi_j}$ is uniformly bounded on $B_{R'}$.

\medskip

On order to show a subsequential convergence of $\set{\varphi_j}$ on $B_R\subset B_{R'}$ in the local $C^\infty$-topology of $B_R$, it suffices to prove that for each point $x\in B_R$, there is a neighborhood $U_x$ of $x$ such that $\set{\varphi_j}$ converges subsequentially in the local $C^\infty$-topology of $U_x$ since $B_R$ is relatively compact.

Take a sufficiently small, local coordinate neighborhood  $U_x\subset B_{R'}$ of a given $x\in B_R$ so that there is a local potential function $\varphi_x$ of $\omega$ on $U_x$ whose gradient length $\norm{d\varphi_x}_\omega$ is bounded. Then we have a sequence $\set{\varphi_j-\varphi_x}$ of uniformly bounded plurisubharmonic functions on $U_x$.  Now we can find $\psi_j:U_x\to\RR$ such that 
\begin{equation*}
\eta_j=\varphi_j-\varphi_x+\im\psi_j
\end{equation*}
is holomorphic solving $d\psi_j = 2\dc(\varphi_j-\varphi_x)$ on  $U_x$. When we normalize $\psi_j$ by $\psi_j(x)=0$, the sequence $\set{\psi_j}$ is also uniformly bounded on $U_x$ since $$\norm{d\psi_j}_\omega = \norm{2\dc(\varphi_j-\varphi_x)}_\omega = \norm{d(\varphi_j-\varphi_x)}_\omega\leq\norm{d\varphi_j}_\omega+\norm{d\varphi_x}_\omega.$$ The sequence $\set{\eta_j}$ of holomorphic functions on $U_x$ is uniformly bounded now; thus it admits a uniformly convergent subsequence on any compact subset of $U_x$. Simultaneously, $\set{\varphi_j}$ converges subsequentially in the local $C^\infty$-topology of $U_x$. This proves the assertion. \qed


\bibliographystyle{abbrv} 
\bibliography{CLS-quotient}

\end{document}